\newtheorem{theorem}{{Theorem}}[section]
\newtheorem{lemma}[theorem]{{Lemma}}
\newtheorem{proposition}[theorem]{{Proposition}}
\newtheorem{remark}[theorem]{{Remark}}
\newtheorem{definition}[theorem]{{Definition}}
\numberwithin{equation}{section} 
\begin{document}

\begin{frontmatter}

\title{Using Householder Matrices to Establish Mixing Test Critical Values}

\begin{aug}
	\author{Aaron Carl Smith}\\
	\href{mailto: aaron.smith@ucf.edu}{aaron.smith@ucf.edu}

	\address{Department of Mathematics \\ University of Central Florida \\ 4000 Central Florida Blvd \\ P.O. Box 161364 \\ Orlando, FL 32816-1364 \\ (407) 823-0538 \\ (407) 823-6253 (fax)}

\end{aug}

\begin{abstract}
A measure-preserving dynamical system can be approximated by a Markov shift with a bistochastic matrix.  This leads to using empirical stochastic matrices to measure and estimate properties of stirring protocols.  Specifically, the second largest eigenvalue can be used to statistically decide if a stirring protocol is weak-mixing, ergodic, or nonergodic.  Such hypothesis tests require appropriate probability distributions.  In this paper, we propose using Monte Carlo empirical probability distributions from unistochastic matrices to establish critical values.  These unistochastic matrices arise from randomly constructed Householder matrices.
\end{abstract}

\begin{keyword}[class=AMS]
\kwd[Primary ]{37A25}
\kwd{62P30}
\kwd[; secondary ]{37A05}
\end{keyword}

\begin{keyword}
\kwd{weak-mixing}
\kwd{measure-preserving}
\kwd{stirring protocol}
\end{keyword}

\tableofcontents

\end{frontmatter}

\section{Introduction}

If a dynamical system has a probability measure and the system is measure-preserving, then partitioning the domain into $n$ states of equal measure leads to a Markov shift whose measure is defined by a bistochastic matrix and the length $n$ row vector
\begin{align} \begin{pmatrix} \tfrac{1}{n} \ldots \tfrac{1}{n} \end{pmatrix}.
\end{align}
This partition approximation is called Ulam's method \cite{MR0280310}.  After partitioning the space, data point movement from one iteration of the function provides a stochastic matrix that approximates the bistochastic matrix from Ulam's method.  Hence a Markov shift with an empirical stochastic matrix and the $1/n$ row vector approximates the dynamical system [see \cite[Chapter 1, Chapter 9]{MR2821978} for procedure and convergence rate].

We may model a stirring protocol's affect on a compression-resistant fluid with a measure-preserving dynamical system.  In this paper we are interested in discrete interations of a stirring protocol where the fluid at the beginning is the same fluid at the end.  We 'look' at the fluid before and after stirring, but not during.

Properties of an empirical Markov shift can measure and evaluate properties of a measure-preserving dynamical system \cite{ MR1668535,froyland1ulam,MR1610768}.  The second largest eigenvalue of a empirical stochastic matrix arising from Ulam's method may be used to statistically decide if a measure-preserving dynamical system is weak-mixing, ergodic, or nonergodic \cite{MR1767953, MR1384679, MR2276432,  MR0412689, MR2821978}.  To statistically test if the dynamical system is ergodic, we need to have some knowledge of
\begin{align}
P(|\widehat{\lambda}_2-1|>k:\lambda_2=1);
\end{align} to statistically test if the dynamical system is weak-mixing, we need to have some knowledge of
\begin{align}
P(|\widehat{\lambda}_2|>k:|\lambda_2|=1).
\end{align}

We use $\lambda_2$ to denote the second largest eigenvalue of a bistochastic matrix arising from Ulam's method, and $\widehat{\lambda}_2$ denotes the second largest eigenvalue of a corresponding empirical stochastic matrix (bistochastic and unistochastic matrices will be defined shortly).  The utility of $\widehat{\lambda}_2$ as a test statistic arises directly from the relationship between stochastic matrix eigenvalues and Markov shift ergodic, mixing properties.  Since the unit circle contains all eigenvalues of stochastic matrices, there are no reasonable probability distributions of $\widehat{\lambda}_2$ with $1$ as the mean or median of either $\widehat{\lambda}_2$ or $|\widehat{\lambda}_2|$.  So for hypothesis testing, we should use a probability distribution that has significant mass near $\widehat{\lambda}_2=1$ or $|\widehat{\lambda}_2|=1$.

In this paper, we show that it is reasonable to approximate the conditional probability distributions with Monte Carlo probability distributions when the equal-measure partition sets are small.  These Monte Carlo probability distributions are constructed using randomly generated Householder matrices.

Stirring protocols of compression resistant fluids, such as chocolate and water, provide examples of nearly measure-perserving dynamical systems.  The need for confidence in the mixing of food items and in the mixing of pharmaceuticals highlights the utility of such probability distributions.

We propose using randomly generated, nonzero, independent, indentically distributed real numbers to generate Householder matrices; take products of permutation matrices with Householder matrices; then square the magnitude of the products' entries  to get unistochastic matrices.  From these unistochastic matrices, construct a Monte Carlo approximation of a desired probability distribution.  From the Monte Carlo probability distribution, establish the critical value for rejecting the null-hypothesis.  The primary focus of this paper is to establish a method for determining hypothesis test critical values.  Deciding which specific probability distribution to use in a hypothesis test depends on properties of the dynamical system; we will only show that the presented Monte Carlo methods are reasonable and leave probability distribution selection for the future.

There are several ways to use Monte Carlo methods to generate bistochastic matrices; unfortunately, many techniques lead to empirical probability distributions where the central tendency of $\widehat{\lambda}_2$ is close to zero \cite[Chapter 12]{MR2821978}.  Such distributions provide little utility for a weak-mixing, ergodic, or nonergodic hypothesis test.  The methods presented here lead to Householder matrices that, in a Frobenius norm sense, are likely to be close to the identity matrix.  Squaring the magnitude of entries from these unitary matrices gives unistochatic matrices.  If we want a unistochastic matrix close to a particular permutation matrix, we may multiply the Householder matrix by the desired permutation matrix.  The main advantage of this method is that it provides probability distributions based on observed unistochastic matrices.

\section{Bistochastic Matrices}
\begin{definition}
An $n \times n$ bistochastic matrix is a stochastic matrix whose transpose is also a stochastic matrix.
\end{definition} By the Birkhoff-von Neumann theorem, the set of $n \times n$ bistochastic matrices form a convex set with permutation matrices as extreme points.  We refer to this set as Birkhoff's polytope \cite{MR2172684,MR0020547}.  Bistochastic matrices are also referred to as \emph{doubly stochastic}.
\begin{definition}
An $n \times n$ bistochastic matrix is called unistochastic if each entry is equal to the squared magnitude of some unitary matrix.
\end{definition}  The set of $n \times n$ unistochastic matrices form a proper subset of Birkhoff's polytope \cite[page 307, section 1]{MR2172684}.  Since the set of unistochastic matrices is a proper subset, the proposed method should only be used when the Ulam's method bistochastic matrix is approximately unistochastic.
\begin{definition}
An $n \times n$ Householder matrix is of the form
\begin{align}
H = I - 2\vec{v}\vec{v}^*
\end{align} where $\vec{v}$ is a unit vector.
\end{definition}  Every Householder matrix is a unitary matrix \cite[Chapter 5]{MR1417720}.  Since the set of $n \times n$ unitary matrices is closed under multiplication, taking the square magnitude of entries from a Householder matrix-permutation matrix product results in a unistochastic matrix.

\section{Modeling Dynamical Systems}

Consider running a stirring protocol on a compression resistant fluid.  Let's model this with a measure-preserving dynamical system $(\mathbb{D},\mathcal{B},\mu,f)$,
\begin{compactenum}
\item $\mathbb{D}$ represents the compression resistant fluid,
\item $\mathcal{B}$ is the Borel $\sigma$-algebra,
\item $\mu$ is rescaled Lebesgue measure so that $\mu(\mathbb{D})=1$,
\item $f:\mathbb{D}\rightarrow \mathbb{D}$ models fluid movement during stirring.
\end{compactenum}

The Monte Carlo method we will outline uses $n \times n$ unistochastic matrices arising from Householder matrix-permutation matrix products.  These unistochastic matrices are close to the permutation matrices when $n$ is large.  It is reasonable to use the described Monte Carlo distribution for hypothesis testing when the dynamical system has the following property:
For any $A,B \in \mathcal{B}$ where $P(A \cap B)=0$, if $f$ is perturbed so that
\begin{align}
P(f(x)\in A \mid x \in B) \text{ increases (decreases), }
\end{align}then for all Borel set $C \subseteq B^c$ \begin{align}
P(f(x)\in A \mid x \in C)& \text{ decreases (increases) proportionally. }
\end{align}

The $1/n$ row vector and unistochastic matrices arising from Householder matrix-permutation matrix products provide Markov shifts that reflect Ulam's method with an equal measure partition applied to such dynamical systems.  This is not saying that all such dynamical systems lead unistochastic matrices, but that the Householder constructed unistochastic matrices reflect these properties.

If $\vec{v}$ is a real unit vector and $H$ is the corresponding Householder matrix,
\begin{align} 
\vec{v} =& \begin{pmatrix}
v_1 \\
v_2 \\
\vdots \\
v_n
\end{pmatrix}, \ 
H = I - 2\vec{v}\vec{v}^t.
\end{align}  Then squaring the entries of $H$ gives a unistochastic matrix $M$,
\begin{align}
m_{ij} &= \begin{cases}
(1-2v_i^2)^2 & \text{ if } i =j, \\
4v_i^2v_j^2 & \text{ if } i \neq j.
\end{cases}
\end{align}

If $\mathbb{D}_1,\mathbb{D}_1,\ldots,\mathbb{D}_n$ are our equal measure partition sets, and $M$ arose from Ulam's method, then the entries of $M$ provide conditional probabilities,
\begin{align}
m_{ij} = P(f(x) \in \mathbb{D}_j \mid x \in \mathbb{D}_i).
\end{align}  Increasing (decreasing) $v_i^2$ leads to nearly proportional decreases (increases) in 
\begin{align}
m_{ij} = P(f(x) \in \mathbb{D}_j \mid x \in \mathbb{D}_i) \text{ when } i \neq j.
\end{align}

Because of these observations, we propose using unistochastic Ulam matrices arising from squaring the entries of real Householder matrix-permutation matrix products to model weak-mixing stirring protocols of such dynamical systems.

\section{Mixing Hypothesis Test Procedure}

In this section we will discuss and outline our procedure for testing a compression-resistant fluid stirring protocol.

There are many techniques to measure a stirring protocol's ability to mix, such as decay of correlations \cite{PhysRevA.26.717}, Fourier analysis, Artin braid patterns \cite{MR2350285,MR2317905}, chaotic advection \cite{MR2560699,MR2107644}, and other topological methods \cite{boyland2011entropy,harrington2011topological}.  Unfortunately, it is typical for different protocols to lend themselves to different analytical methods.  Thus comparing mixing quality between mechanically dissimilar stirring protocols is difficult.  The primary advantage of the Ulam method approximation is that it can be used to evaluate any incompressible fluid stirring pattern.  This allows one to compare and evaluate the mixing of stirring protocols by comparing and evaluating eigenvalues.  The main disadvantage is that the method is statistical and does not prove the results.  Another significant advantage of our method is that it requires only one iteration of stirring, in contrast to other techniques that call for iterated experiments.

Since our method only approximates the dynamical system, we make no inferences regarding strong-mixing when we conclude that the stirring protocol is weak-mixing.  If the protocol is not ergodic, then it is not weak-mixing.  If the protocol is not weak-mixing, then it is not strong-mixing.

Our test hypotheses are
\begin{compactenum}
\item $H_o: \ (\mathbb{D},\mathcal{B},\mu,f)$ is not ergodic (and hence not weak-mixing).
\item $H_{a_1}: \ (\mathbb{D},\mathcal{B},\mu,f)$ is ergodic but not weak-mixing.
\item $H_{a_2}: \ (\mathbb{D},\mathcal{B},\mu,f)$ is weak-mixing (and hence ergodic).
\end{compactenum}  

We partition the fluid into connected, equal volume regions, and use these partition sets to generate a new $\sigma$-algebra contained in $\mathcal{B}$.  If our data strongly indicate that the stirring protocol is weak-mixing or ergodic over the generated $\sigma$-algebra, we will conclude the same about the original dynamical system.  If the stirring protocol is nonmixing or nonergodic over the generated $\sigma$-algebra, then the original dynamical system is nonmixing or nonergodic.  The procedure evaluates stirring over a smaller $\sigma$-algebra, thus the test is inherently more reliable for detecting if a protocol is nonmixing or nonergodic.

The null hypothesis is that the stirring protocol is nonergodic.  It is better to reject a protocol that mixes well than to produce poorly mixed product.  The repercussions of a testing error are as follows: 
\begin{compactenum}
\item Type I Error: discard a desirable stirring protocol for a different stirring method
\item Type II Error: produce a product that is insufficiently mixed
\end{compactenum}

The stirring protocol's purpose determines the tolerable risks of error and the number of partition states.  If poorly mixed fluid could result in minor consequences or mixing on a small scale is inapt, then the number of partition regions may be relatively small.  If poorly mixed fluid could result in severe consequences, then the number of partition regions must be large and partition volume small.  For example, poorly mixed batter from a kitchen could result in unpalatable food; poorly mixed pharmaceuticals with a low LD50 could lead to overdose and death.  A mixing test for a kitchen could use a relatively coarse partition, while a pharmaceutical company would use a fine partition.

If we know an upper bound for the stirring protocol's entropy, call it $h$, then Froyland's entropy estimate and expected values show that the number of states should be greater than $e^h$ \cite{MR1668535}.

Data point movement from one iteration of stirring leads to our empirical stochastic matrix, $\widehat{P}$.  The percent of points that start in region $i$ and end in region $j$ gives us $\widehat{p}_{ij}$.  We model the entries of $\widehat{P}$ as nonindependent binomial random variables, whose probabilities come from the Ulam stochastic matrix.  Our test statistic is $\widehat{\lambda_2} = \lambda_2(\widehat{P})$.  Some dynamical systems have measure zero sets with atypical properties.  In an attempt to avoid such difficulties, we randomly select data points rather than select points from a grid.

If the data points are independent, uniform, and randomly distributed within each region, then the empirical matrix will converge to a bistochastic matrix in a Frobenius norm sense (the proof of this follows from extending a standard Monte Carlo argument \cite{robert2009introducing}).  We approximate the stirring protocol with a one-sided Markov shift, $\bigg(\big(\tfrac{1}{n},\ldots,\tfrac{1}{n}\big),\widehat{P}\bigg)$.  The pair will not define a Markov shift if \begin{align}
\big(\tfrac{1}{n},\ldots,\tfrac{1}{n}\big)\widehat{P} \neq \big(\tfrac{1}{n},\ldots,\tfrac{1}{n}\big).
\end{align} The $1/n$ row vector is a stationary distribution for any bistochastic matrix.

Since Ulam method's partitions our fluid into $n$ equal volume regions, it is reasonable to use the $1/n$ vector as the stationary distribution.  If we do not use equal volume partitions, the stationary distribution will be the probability vector corresponding to the rescaled volume of each region.  Many of the results regarding convergence, expected values, convergence rates, etc. depend on equal measure partitions, we should use equal volume regions if appropriate \cite{MR2821978}.

\begin{flushleft}
\bf{Mixing Hypothesis Test Procedure:}
\end{flushleft}
\begin{compactenum}
\item Set the type II error significance levels for both alternative hypotheses, $\alpha_1$ and $\alpha_2$.
\item Set $n$ to be the number of partition regions.
\item Decide which conditional probability distribution(s) for $|\lambda_2(P)|=1$ and $\lambda_2(P)=1$ to use.
\item Establish the critical values for $H_{a_1}$ and $H_{a_2}$, $c_1$ and $c_2$.  The purpose of this paper is to propose using Householder matrix-permutation matrix products to estimate $c_1$ and $c_2$.
\item Partition the fluid into $n$ connected equal volume regions, $\mathbb{D}_1,\mathbb{D}_2,\ldots,\mathbb{D}_n$.
\item Randomly select data points in each partition region.  These points should be independent and uniformly distributed.
\item Run the stirring protocol one time.
\item Use data point movement between regions to\\ construct an empirical stochastic matrix, $\widehat{P}$.
\item Determine the hypothesis test result.  The test statistic is $\lambda_2(\widehat{P})$;\\ compare $\mid \lambda_2(\widehat{P}) -1\mid$ to $c_1$;\\ compare $\mid \lambda_2(\widehat{P})\mid$ to $c_2$.
\item Use Froyland's entropy estimate to estimate the dynamical system's entropy.\begin{align}
-\tfrac{1}{n}\sum_{i=1}^{n}\sum_{j=1}^{n}\widehat{p}_{ij} \log \widehat{p}_{ij} 
\end{align} (we define $0 \log 0$ to equal 0).
\item If the null hypothesis is rejected in favor of weak-mixing, let the rate at which
\begin{align}
\binom{N}{n-1}(\lambda_2(\widehat{P}))^{N-n+1} \rightarrow 0 \text{ as } N \rightarrow \infty
\end{align} be our estimate of the rate of mixing.
\end{compactenum}

\section{Constructing the Monte Carlo Matrices}

Let
\begin{align}
n \in \{ 3,4,5,6,7,...\}
\end{align} be the number of equal measure states that we partition the measure-preserving dynamical system into while using Ulam's method [see \cite[Chapter 1]{MR2821978} for the procedure].  Let \begin{align}
\{u_1,u_2,\ldots,u_n\}
\end{align} be real independent, identically distributed random variables such that
\begin{align}
u_i \neq 0 \text{ almost surely, and }
E\big(\tfrac{1}{u_i^8}\big), E(u_i^8),E\big(\tfrac{1}{u_i^4}\big), E(u_i^4) < \infty.
\end{align}

Let
\begin{align}
\vec{u} = \begin{pmatrix}
u_1 \\
u_2 \\
\vdots \\
u_n
\end{pmatrix}, 
\vec{v} = \tfrac{\vec{u}}{|\vec{u}|}.
\end{align}  We may use a unit vector to construct a Householder matrix.  Let $H = (h_{ij})$ be the Householder matrix corresponding to $\vec{v}$.
\begin{align}
H &= I - 2\vec{v}\vec{v}^T 
\end{align}

The entries of $H$ are
\begin{align}
h_{ij} &= \begin{cases}
1-\tfrac{2}{u_1^2+ u_2^2+\ldots + u_n^2}u_i^2 & \text{ if } i =j, \\
-\tfrac{2}{u_1^2+ u_2^2+\ldots + u_n^2}u_i u_j & \text{ if } i \neq j.
\end{cases}
\end{align} 

Let $Q$ be a permutation matrix that we want our random unistochastic matrix to be proximal to.  Set $U = QH$.  Now, let $M= (m_{ij})$ be the matrix defined by
\begin{align}
m_{ij} &= u_{ij}^2.
\end{align}

Since $H$ is a Householder matrix and $Q$ is a permutation matrix, $U$ is a unitary matrix.  It follows that $M$ is unistochastic.

Notice that
\begin{align}
m_{ij} 
&= \begin{cases}
\big(1-\tfrac{2}{u_1^2+ u_2^2+\ldots + u_n^2}u_i^2\big)^2 & \text{ if } i =j, \\
\tfrac{4}{(u_1^2+ u_2^2+\ldots + u_n^2)^2}u_i^2 u_j^2 & \text{ if } i \neq j.
\end{cases}
\end{align}

Since $u_i \neq 0$ almost surely for all $i$, all entries of $M$ are positive almost surely; by the Perron-Frobenius theorem, all but one of $M$'s eigenvalues are of magnitude strictly less than one \cite[Chapter 8]{MR1777382}.  It follows that any Markov shift with stationary distribution
\begin{align}
\big(\tfrac{1}{n} \ldots \tfrac{1}{n}\big)
\end{align} will be strong-mixing (for Markov shifts, weak-mixing is equivalent to strong-mixing).  Our hypothesis test for weak-mixing (ergodic) requires a probability distribution over $[0,1]$ (the unit circle) with significant mass near $1$.  In the next two sections, we will see that the expected value of $M$'s eigenvalues converge to one as $n$ goes to infinity.

How does our dynamical system relate to $n$?  Generally speaking, finer partitions are more apt to detect nonmixing (nonergodicity).  If we are confident in mixing, we will use a coarse partition to reduce effort; if our confidence in mixing is poor, we will use a fine partition.

By the Birckoff-von Neumann theorem, bistochastic matrices are convex combinations of permutation matrices \cite{MR2172684,MR0020547}.  So our unistochastic matrices will tend to be near the 'corners' of the set of bistochastic matrices.  For any statistic from unistochastic matrices we are interested in, we may use such matrices to generate a Monte Carlo empirical probability distribution.

\section{Establishing Critical Values}

In this section, we will outline the procedure we propose for establishing critical values for a weak-mixing, ergodic, nonergodic hypothesis test.

Our test hypotheses are
\begin{compactenum}
\item $H_o: \ (\mathbb{D},\mathcal{B},\mu,f)$ is not ergodic (and hence not weak-mixing).
\item $H_{a_1}: \ (\mathbb{D},\mathcal{B},\mu,f)$ is ergodic but not weak-mixing.
\item $H_{a_2}: \ (\mathbb{D},\mathcal{B},\mu,f)$ is weak-mixing (and hence ergodic).
\end{compactenum}
After partitioning the space into $n$ equal measure connected subsets, Ulam's method approximates the dynamical system with a Markov shift.  We will approximate the bistochastic matrix defining the Markov shift's measure, $P$, with an empirical stochastic matrix, $\widehat{P}$.  So we approximate 
\begin{align}
(\mathbb{D},\mathcal{B},\mu,f) \text{ with } \Big(\big(\tfrac{1}{n},\ldots,\tfrac{1}{n}\big),\widehat{P}\Big).
\end{align}

\begin{remark}
The pair
\begin{align}
\Big(\big(\tfrac{1}{n},\ldots,\tfrac{1}{n}\big),\widehat{P}\Big)
\end{align} will not define a Markov shift if
\begin{align}
\big(\tfrac{1}{n},\ldots,\tfrac{1}{n}\big)\widehat{P} \neq \big(\tfrac{1}{n},\ldots,\tfrac{1}{n}\big),
\end{align} but if our data points are uniform random variables within each state, then
\begin{align}
E(\Vert P - \widehat{P} \Vert_F) \rightarrow 0
\end{align} as the minimum number of points in a state goes towards infinity. It follows that for each eigenvalue
\begin{align}
\mid \lambda_i(P) - \lambda_i(\widehat{P}) \mid \rightarrow 0 \text{ in probability } \ \forall i
\end{align} in the Hausdorf topology when our data points are uniform random variables within each state and the minimum number of points in a state goes towards infinity \cite[Chapter 8]{MR2821978}.
\end{remark}

Our test statistic is the second largest eigenvalue of $\widehat{P}$.  Let $\alpha_1, \alpha_2 \in (0,1)$ be the alpha values for the hypothesis test; let $c_1, c_2 \in (0,1)$ be the corresponding critical values, $c_2 \leq 1-c_1$,
\begin{align}
P(\mid \lambda_2(\widehat{P}) - 1 \mid \geq c_1 : \lambda_2(P)=1)& < \alpha_1, \\
P(\mid \lambda_2(\widehat{P}) \mid \leq c_2 : \mid \lambda_2(P) \mid =1)& < \alpha_2.
\end{align}

Our goal is to use Householder matrices to estimate $c_1$ and $c_2$.  The probability distribution used to establish $c_1,c_2$ should reflect properties of a class of dynamical systems containing our stirring protocol.

\begin{tikzpicture}[scale=2.7]
\node[below, text width=12cm] at (0,-1.25) {
\small\textbf{Argand diagram of mixing hypothesis test criteria:}\par
\begin{compactenum}
\item If $\lambda_2(\widehat{P})$ is in the region containing $1$ (yellow), fail to reject the null hypothesis; conclude that the dynamical system is nonergodic.\\
\item If $\lambda_2(\widehat{P})$ is in the outer region away from $1$ (green), reject the null hypothesis in favor of the first alternative hypothesis; conclude that the dynamical system is ergodic, but not weak-mixing.\\
\item If $\lambda_2(\widehat{P})$ is in the center region (red), reject the null hypothesis in favor of the second alternative hypothesis; conclude that the dynamical system is weak-mixing (and hence ergodic).
\end{compactenum}
};
\fill[fill = green] (0,0) circle (1);
\fill[fill = yellow] (0.944,0.329) arc (90:270:0.33) -- (0.944,-0.329) -- (0.95,-0.312) -- (0.96,-0.28) -- (0.97,-0.243) -- (0.98,-0.199) -- (0.99,-0.141) -- (0.995,-0.0999) -- (0.999,-0.045) -- (1,0) -- (1,0.045) -- (0.995,0.0999) -- (0.99,0.141) -- (0.98,0.199) -- (0.97,0.243) -- (0.96,0.28) -- (0.95,0.312) -- (0.944,0.329);
\fill[fill = red!75] (0,0) circle (0.33);
\draw[<->] (-1.25,0) -- (1.25,0);
\foreach \x/\xtext in {-1, -0.333/-c_2,0.333/c_2, 0.615/1-c_1, 1}
\draw (\x cm,1pt) -- (\x cm,-1pt) node[anchor=north] {$\xtext$};
\draw[<->] (0,-1.25) -- (0,1.25);
\foreach \y/\ytext in {-1/-i, -0.33/-c_2 i,0.33/c_2 i, 1/i}
\draw (1pt,\y cm) -- (-1pt,\y cm) node[anchor=east] {$\ytext$};
\end{tikzpicture}

\begin{flushleft}
\bf{Establishing Critical Values for the Test:}
\end{flushleft}
\begin{compactenum}
\item Partition $\mathbb{D}$ into $n$ equal measure connected subsets.  If an upper bound of the dynamical system's entropy is known, call the upper bound $h$, set $n$ greater than $e^h$ \cite{MR1668535}.
\item Select a random variable with which to construct unit vectors.\\ Let $u_1,u_2,\ldots,u_n$ be independent, identically distributed, random variables,\\ $\vec{v}_i = \tfrac{\vec{u}}{\Vert u \Vert_2}$.
\item Set $N \in \mathbb{N}$ so that our empirical probability distributions will be sufficiently accurate.
\item Select permutation matrices $\{Q_i\}_{i=1}^N$ near which we want the probability distribution to have significant mass.
\item Randomly generate $N$ Householder matrices, $H_i = I-2\vec{v}_i\vec{v}_i^T$.  Then square the entries of $Q_iH_i$ to get the matrix $M_i$.
\item Use $\{ \lambda_2(M_i) \}_{i=1}^N$ to approximate $P(\mid \lambda_2(\widehat{P}) - 1 \mid \geq k : \lambda_2(P)=1)$.  Use the approximation to estimate $c_1$.
\item Use $\{ \mid \lambda_2(M_i) \mid \}_{i=1}^N$ to approximate $P(\mid \lambda_2(\widehat{P}) \mid < k : \lambda_2(P)=1)$.  Use the approximation to estimate $c_2$.
\end{compactenum}

\section{Matrix Convergence}

In this section, we will show that $M_i$ from the previous section will converge to the permutation matrix $Q_i$ as $n$ increases.  Since permutation matrix eigenvalues are on the unit circle, as a random variable, it is likely that the second largest eigenvalue from one of our unistochastic matrices will be near magnitude one.  Because of the likely proximity to one, it is reasonable to use a probability distribution from such an eigenvalue to establish critical values for our weak-mixing, ergodic, nonergodic hypothesis test.

Our proofs take advantage of the Frobenius norm.  After using Jensen's inequality to remove the square root from consideration, finding an expected value upper bound is similar to finding a second moment.  A permutation matrix acting on a matrix does not change the magnitude of the entries, without loss of generality will prove the results for when $Q$ is the indentity matrix and focus on $M$'s convergence to the identity matrix.

\begin{proposition}
If $M$ is a matrix constructed in section $2$ with $Q = I$, $n \in \{ 3,4,5,\ldots \}, 
\{ u_i \}_{i=1}^{n}$ are identically distributed, $u_i \neq 0$ a.s. and
\begin{align}
E(u_i^4),E(u_i^8),E\big(\tfrac{1}{u_i^4}\big),E\big(\tfrac{1}{u_i^8}\big)< \infty,
\end{align}
then $E(\parallel M-I \parallel_F) \rightarrow 0$ as $n \rightarrow \infty$.  Moreover,
\begin{align}
E(\parallel M -I \parallel_F^2) \leq&  \tfrac{16n}{(n-1)^4} E(\tfrac{1}{u_i^8}) E(u_i^8) \\
&+\tfrac{16n}{(n-1)^2} E(\tfrac{1}{u_i^4}) E(u_i^4) \\ 
&+\tfrac{16n(n-1)}{(n-2)^4}  E(\tfrac{1}{u_i^8}) (E(u_i^4))^2.\\
\end{align}
\end{proposition}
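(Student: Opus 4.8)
The plan is to work with the square of the Frobenius norm, since Jensen's inequality applied to the concave map $t \mapsto \sqrt{t}$ gives $E(\|M-I\|_F) \le \sqrt{E(\|M-I\|_F^2)}$; it therefore suffices to establish the stated three-term bound on $E(\|M-I\|_F^2)$ and to observe that each term vanishes as $n\to\infty$. First I would set $S = u_1^2 + \cdots + u_n^2$ and record the entries of $M-I$ explicitly, namely $m_{ij} = 4u_i^2u_j^2/S^2$ off the diagonal and $m_{ii}-1 = (1-2u_i^2/S)^2 - 1$ on it. Squaring and summing these entries produces
\begin{align}
\|M-I\|_F^2 = \sum_i \left(\frac{16u_i^8}{S^4} - \frac{32u_i^6}{S^3} + \frac{16u_i^4}{S^2}\right) + \sum_{i\neq j}\frac{16u_i^4u_j^4}{S^4}.
\end{align}
The first useful observation is that the middle diagonal term $-32u_i^6/S^3$ is nonpositive and may be discarded for an upper bound, leaving three nonnegative sums that correspond one-to-one with the three terms in the claimed estimate.

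The heart of the argument is to bound each summand in expectation, and the main device is a combination of the AM--GM inequality with Jensen's inequality. For the first sum, fix $i$ and discard $u_i^2$ from $S$, so that $S \ge \sum_{k\neq i} u_k^2 \ge (n-1)\bigl(\prod_{k\neq i}u_k^2\bigr)^{1/(n-1)}$ by AM--GM; this yields $u_i^8/S^4 \le (n-1)^{-4}\, u_i^8 \prod_{k\neq i} u_k^{-8/(n-1)}$. Taking expectations and using independence factors the right-hand side as $(n-1)^{-4} E(u^8)\bigl(E(u^{-8/(n-1)})\bigr)^{n-1}$, and Jensen's inequality applied to the convex map $t\mapsto t^{n-1}$ collapses the product of fractional moments into a single integer moment, $\bigl(E(u^{-8/(n-1)})\bigr)^{n-1} \le E(1/u^8)$. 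Summing the $n$ terms delivers $\tfrac{16n}{(n-1)^4}E(1/u^8)E(u^8)$. The second sum follows the identical AM--GM-then-Jensen template with exponents $4$ and $2$ in place of $8$ and $4$, giving $\tfrac{16n}{(n-1)^2}E(1/u^4)E(u^4)$.

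For the third sum one drops both $u_i^2$ and $u_j^2$ from $S$, applies AM--GM to the remaining $n-2$ summands to extract the factor $(n-2)^{-4}$, and collapses $\bigl(E(u^{-8/(n-2)})\bigr)^{n-2} \le E(1/u^8)$ by Jensen; the double sum over the $n(n-1)$ ordered pairs then contributes the count and the product $(E(u^4))^2$, yielding $\tfrac{16n(n-1)}{(n-2)^4}E(1/u^8)(E(u^4))^2$. Finally I would note that the four hypothesized moment conditions are exactly what the three terms require, and that finiteness of $E(1/u^8)$ forces finiteness of every lower negative fractional moment appearing above (again by Jensen), so all expectations are well defined; the limit $E(\|M-I\|_F)\to 0$ is then immediate since the three terms decay like $n^{-3}$, $n^{-1}$, and $n^{-2}$. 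The step I expect to be the main obstacle is this Jensen collapse of the fractional-moment products: one must get the direction of the inequality right and confirm that the independence-plus-identical-distribution assumption simultaneously justifies factoring the expectation and passing from $\bigl(E(u^{-8/(n-1)})\bigr)^{n-1}$ to $E(1/u^8)$.
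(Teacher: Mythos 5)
Your proposal is correct, and it follows the paper's skeleton almost step for step: Jensen's inequality to reduce to $E(\Vert M-I\Vert_F^2)$, the same entrywise expansion of $(M-I)_{ij}^2$, discarding the nonpositive diagonal cross term $-32u_i^6/S^3$, and shrinking the denominator by deleting $u_i^2$ (and $u_j^2$ for the off-diagonal sum) so that numerator and denominator become independent. Where you genuinely diverge is the decisive estimate of the negative moment $E\bigl(\bigl(\sum_{k\neq i}u_k^2\bigr)^{-4}\bigr)$: the paper applies the harmonic--arithmetic mean inequality to get $\bigl(\sum_{k\neq i}u_k^2\bigr)^{-1}\leq (n-1)^{-2}\sum_{k\neq i}u_k^{-2}$ and then controls $E\bigl(\bigl(\sum_{k\neq i}u_k^{-2}\bigr)^4\bigr)\leq (n-1)^4E(1/u^8)$ via Minkowski's inequality in $L^4$, whereas you apply AM--GM directly to the reduced denominator and collapse the resulting product of fractional moments via $\bigl(E(u^{-8/(n-1)})\bigr)^{n-1}\leq E(1/u^8)$, which is Jensen for the convex map $t\mapsto t^{n-1}$ (and you got the direction right, which you correctly flagged as the delicate point). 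Both routes exploit independence at exactly the same place and, reassuringly, produce identical constants, so the displayed three-term bound is recovered exactly, including the $16n/(n-1)^4$, $16n/(n-1)^2$, and $16n(n-1)/(n-2)^4$ prefactors and the correct count $n(n-1)$ of ordered pairs. Your AM--GM/Jensen variant is arguably more self-contained, avoiding the $L^p$ triangle inequality and any expansion of a fourth power of a sum; on the other hand, it leans on the identical-distribution hypothesis to collapse the fractional moments into $E(1/u^8)$, while the paper's Minkowski step would adapt more transparently to independent but non-identically distributed $u_k$. Your closing observation that finiteness of $E(1/u^8)$ dominates all the fractional negative moments (again by Jensen, now with a concave power) correctly disposes of the integrability caveat, and your decay rates $n^{-3}$, $n^{-1}$, $n^{-2}$ for the three terms are accurate, so the conclusion $E(\Vert M-I\Vert_F)\rightarrow 0$ follows as claimed.
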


\begin{proof}
First, by Jensen's inequality
\begin{align}
E(\parallel M-I \parallel_F) \leq \sqrt{E(\parallel M-I \parallel_F^2)}.
\end{align}
So it is sufficient to show the second part of the proposition.  Let's look at the entries of $M-I$; by computation we see that:
\begin{align}
(M-I)_{ij} 
&= \begin{cases}
-4(\tfrac{1}{u_1^2+ u_2^2+\ldots + u_n^2}u_i^2) & \\
 \ \ \times (1-\tfrac{1}{u_1^2+ u_2^2+\ldots + u_n^2}u_i^2 )  & \text{ if } i =j, \\
\tfrac{4}{(u_1^2+ u_2^2+\ldots + u_n^2)^2}u_i^2 u_j^2 & \text{ if } i \neq j.
\end{cases}
\end{align}
It follows that 
\begin{align}
(M-I)_{ij}^2 
&= \begin{cases}
\tfrac{16}{(u_1^2+ u_2^2+\ldots + u_n^2)^4}u_i^8 & \\
\ \ -\tfrac{32}{(u_1^2+ u_2^2+\ldots + u_n^2)^3}u_i^6 & \\ 
\ \ +\tfrac{16}{(u_1^2+ u_2^2+\ldots + u_n^2)^2}u_i^4  & \text{ if } i =j, \\
\tfrac{16}{(u_1^2+ u_2^2+\ldots + u_n^2)^4}u_i^4 u_j^4 & \text{ if } i \neq j.
\end{cases}
\end{align}
If we expand the addends and remove the negative terms, it follows that almost surely
\begin{align}
\parallel M -I \parallel_F^2 <& 
\sum_{i=1}^{n}\tfrac{16}{(u_1^2+ u_2^2+\ldots + u_n^2)^4}u_i^8 \\
&+\sum_{i=1}^{n}\tfrac{16}{(u_1^2+ u_2^2+\ldots + u_n^2)^2}u_i^4  \\
& +\sum_{i \neq j} \tfrac{16}{(u_1^2+ u_2^2+\ldots + u_n^2)^4}u_i^4 u_j^4.
\end{align}
Almost surely, all of the terms in the denominators are positive; if we subtract terms from the denominators, we get an upper bound on the fractions.  Thus almost surely
\begin{align}
\parallel M -I \parallel_F^2 <& 
\sum_{i=1}^{n}\tfrac{16}{(u_1^2+ u_2^2+\ldots + u_n^2 -u_i^2)^4}u_i^8 \\
&+\sum_{i=1}^{n}\tfrac{16}{(u_1^2+ u_2^2+\ldots + u_n^2-u_i^2)^2}u_i^4  \\
& +\sum_{i \neq j} \tfrac{16}{(u_1^2+ u_2^2+\ldots + u_n^2-u_i^2-u_j^2)^4}u_i^4 u_j^4.
\end{align}
Notice that the numerator and denominator in each fraction in the upper bound are independent.


The subtraction of terms in denominators removed some positive terms, so the denominators are sums of positive terms.  Therefore, we may use the harmonic-arithmetic means inequality,
\begin{align}
\parallel M -I \parallel_F^2 <& 16\sum_{i=1}^{n}\tfrac{1}{(n-1)^8}(\tfrac{1}{u_1^2}+\tfrac{1}{u_2^2}+\ldots+\tfrac{1}{u_n^2}-\tfrac{1}{u_i^2})^4u_i^8\\
&+16\sum_{i=1}^{n}\tfrac{1}{(n-1)^4}(\tfrac{1}{u_1^2}+\tfrac{1}{u_2^2}+\ldots+\tfrac{1}{u_n^2}-\tfrac{1}{u_i^2})^2u_i^4\\
&+16\sum_{i \neq j} \tfrac{1}{(n-2)^8}(\tfrac{1}{u_1^2}+\tfrac{1}{u_2^2}+\ldots+\tfrac{1}{u_n^2}-\tfrac{1}{u_i^2}-\tfrac{1}{u_j^2})^4u_i^4 u_j^4
\end{align}
Now let's take expected values; since the $u_i$'s are independent,
\begin{align}
\tfrac{E(\parallel M -I \parallel_F^2)}{16} \leq& \sum_{i=1}^{n}E((\tfrac{1}{u_1^2}+\tfrac{1}{u_2^2}+\ldots+\tfrac{1}{u_n^2}-\tfrac{1}{u_i^2})^4)\tfrac{E(u_i^8)}{(n-1)^8}\\ 
&+\sum_{i=1}^{n}E((\tfrac{1}{u_1^2}+\tfrac{1}{u_2^2}+\ldots+\tfrac{1}{u_n^2}-\tfrac{1}{u_i^2})^2)\tfrac{E(u_i^4)}{(n-1)^4}  \\
&+\sum_{i \neq j} E((\tfrac{1}{u_1^2}+\tfrac{1}{u_2^2}+\ldots+\tfrac{1}{u_n^2}-\tfrac{1}{u_i^2}-\tfrac{1}{u_j^2})^4)\tfrac{E(u_i^4)E( u_j^4)}{(n-2)^8}.
\end{align}
Next we use Minkowski's inequality and the fact that the $u_i$'s are independently distributed,
\begin{align}
E(\parallel M -I \parallel_F^2) \leq& 
\tfrac{16n}{(n-1)^4} E(\tfrac{1}{u_i^8}) E(u_i^8)\\ 
& +\tfrac{16n}{(n-1)^2} E(\tfrac{1}{u_i^4}) E(u_i^4)  \\
& +\tfrac{16n(n-1)}{(n-2)^4}  E(\tfrac{1}{u_i^8}) (E(u_i^4))^2.\\
\end{align}
Since $E(\tfrac{1}{u_i^8}), E(u_i^8),E(\tfrac{1}{u_i^4})$, and $E(u_i^4)$ are all finite, it follows that 
\begin{align}
E(\parallel M -I \parallel_F^2) \rightarrow 0
\end{align}
as $n \rightarrow \infty$.
Hence by Jensen's inequality,
\begin{align}
E(\parallel M-I \parallel_F) \rightarrow 0
\end{align}
as $n \rightarrow \infty$.
\end{proof}







Since the second largest eigenvalue gives a test statistic to decide if a measure-preserving dynamical system is weak mixing, ergodic, or nonergodic, we need a conditional probability distribution of eigenvalues to conduct hypothesis tests.  To statistically test if a measure-preserving dynamical system is weak-mixing, we could randomly select permutation matrices 
\begin{align}
\{ Q_k \}_{k=1}^N
\end{align}
and generate $\{ M_k  \}_{k=1}^N$, with our Householder method, then use the empirical probability distribution from 
\begin{align}
\{ |\lambda_2(M_k)| \}_{i=1}^N
\end{align}
to establish the critical value for the weak-mixing hypothesis test.

To statistically test if a measure-preserving dynamical system is ergodic, we could randomly select permutation matrices 
\begin{align}
\{ Q_k : \text{the multiplicity of } \lambda=1 \text{ is at least two}\}_{k=1}^N
\end{align} and use Householder matrices to generate $\{ M_k  \}_{k=1}^N$, then use the empirical probability distribution from 
\begin{align}
\{ \lambda_2(M_k) \}_{k=1}^N
\end{align}
to establish a critical value for the ergodic hypothesis test.

\section{Using Specific Random Variables}

In this section, we find more precise upper bounds for specific random variables.  These upper bounds give better estimates of convergence rate than the results in the previous section.  The first two proofs in this section start out the same way as the first proof in the previous section, then the arguments take advantage of the distribution properties.

Let's find a more precise upper bound when the $u_i$'s are independent standard normal random variables.  The proof is similar to the first convergence proof, the difference is that we take advantage of the relationship between normal random variables and $\chi^2$-distributions.

\begin{proposition}
If the $u_i$'s in the construction of a unistochastic matrix are independent standard normal random variables and $n \in \{11,12,13,\ldots\}$, then
\begin{align}
E(\parallel M -I \parallel_F^2) <& \tfrac{1680n}{(n-3)(n-5)(n-7)(n-9)} \\
&+\tfrac{48n}{(n-3)(n-5)} \\
&+\tfrac{144n(n-1)}{(n-4)(n-6)(n-8)(n-10)}.
\end{align}
\end{proposition}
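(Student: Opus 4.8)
The plan is to mirror the opening of the preceding proposition verbatim and then exploit the Gaussian structure to evaluate the resulting expectations in closed form, rather than bounding them with the harmonic--arithmetic mean inequality as in the general case.

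First I would reproduce the algebra from the previous proof. Writing $S = u_1^2 + u_2^2 + \cdots + u_n^2$, I record the entries of $M - I$ and their squares, discard the almost surely negative diagonal cross term $-\tfrac{32}{S^3}u_i^6$, and replace each denominator by the sum that omits the variables appearing in its numerator. This yields, almost surely,
\begin{align}
\parallel M - I \parallel_F^2 <& \sum_{i=1}^n \tfrac{16 u_i^8}{(S - u_i^2)^4} + \sum_{i=1}^n \tfrac{16 u_i^4}{(S - u_i^2)^2} + \sum_{i \neq j} \tfrac{16 u_i^4 u_j^4}{(S - u_i^2 - u_j^2)^4},
\end{align}
where in each summand the numerator and denominator are independent. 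Up to this point nothing is specific to the normal distribution.

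The key step, and the reason for restricting to standard normals, is that the reduced denominators are now chi-squared variables independent of their numerators: $S - u_i^2 = \sum_{k \neq i} u_k^2 \sim \chi^2_{n-1}$ and $S - u_i^2 - u_j^2 = \sum_{k \neq i,j} u_k^2 \sim \chi^2_{n-2}$. Taking expectations and factoring across the independence, I would substitute the even Gaussian moments $E(u_i^4) = 3$ and $E(u_i^8) = 105$, together with the inverse-moment identity for chi-squared variables,
\begin{align}
E\big((\chi^2_k)^{-m}\big) = \tfrac{1}{(k-2)(k-4)\cdots(k-2m)},
\end{align}
evaluated at $(k,m) = (n-1,4)$, $(n-1,2)$, and $(n-2,4)$. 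Counting the $n$ diagonal summands and the $n(n-1)$ off-diagonal summands and collecting the constants $16 \cdot 105 = 1680$, $16 \cdot 3 = 48$, and $16 \cdot 3 \cdot 3 = 144$ then produces the three claimed terms directly.

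The hypothesis $n \geq 11$ is exactly what is needed for the last inverse moment $E((\chi^2_{n-2})^{-4}) = 1/[(n-4)(n-6)(n-8)(n-10)]$ to be finite and positive, since $\chi^2_{n-2}$ must have more than eight degrees of freedom. There is no deep obstacle here: the work is a careful computation, and the thing most likely to go wrong is the bookkeeping in the inverse-moment identity. I would therefore verify that $E((\chi^2_k)^{-m})$ really equals the stated reciprocal of a descending product, which follows from $\chi^2_k \sim \Gamma(k/2, 2)$ and $E((\Gamma(a,\theta))^{-m}) = \theta^{-m}\Gamma(a-m)/\Gamma(a)$, and I would confirm that the inequality remains strict, which it does because the discarded diagonal term is strictly negative on a set of positive probability (as $u_i \neq 0$ almost surely).
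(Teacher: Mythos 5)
Your proposal is correct and takes essentially the same approach as the paper's proof: both start from the almost-sure bound on $\parallel M-I\parallel_F^2$ inherited from the preceding proposition, recognize the reduced denominators $S-u_i^2 \sim \chi^2_{n-1}$ and $S-u_i^2-u_j^2 \sim \chi^2_{n-2}$ as chi-squared variables independent of their numerators, and evaluate $E(u_i^4)=3$, $E(u_i^8)=105$ together with the inverse chi-squared moments $E\big((\chi^2_k)^{-m}\big)=\tfrac{1}{(k-2)(k-4)\cdots(k-2m)}$ to collect the constants $1680$, $48$, and $144$. Your explicit verification of the inverse-moment identity and of why $n\geq 11$ is required merely fills in bookkeeping the paper leaves implicit.
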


\begin{proof}
From the previous proof, we know that almost surely
\begin{align}
\parallel M -I \parallel_F^2 <& 
\sum_{i=1}^{n}\tfrac{16}{(u_1^2+ u_2^2+\ldots + u_n^2 -u_i^2)^4}u_i^8 \\
&+\sum_{i=1}^{n}\tfrac{16}{(u_1^2+ u_2^2+\ldots + u_n^2-u_i^2)^2}u_i^4  \\
& +\sum_{i \neq j} \tfrac{16}{(u_1^2+ u_2^2+\ldots + u_n^2-u_i^2-u_j^2)^4}u_i^4 u_j^4.
\end{align}

Since $u_i$'s are independent standard normal random variables, we may replace the $u_i$'s with $\chi^2$-random variables when we take expected values.
 
\begin{align}
E(\parallel M -I \parallel_F^2) < 
&\sum_{i=1}^{n}16E(\tfrac{1}{(\gamma_{n-1})^4})E(u_i^8) \\
&+\sum_{i=1}^{n}16E(\tfrac{1}{(\gamma_{n-1})^2})E(u_i^4)  \\
& +\sum_{i \neq j} 16E(\tfrac{1}{(\gamma_{n-2})^4})E(u_i^4)E(u_j^4).
\end{align}

We use $\gamma_j$ to denote a $\chi^2$-random variable with $j$ degrees of freedom.  If we take expected values and remove negative terms, it follows that
\begin{align}
E(\parallel M -I \parallel_F^2) <& \tfrac{1680n}{(n-3)(n-5)(n-7)(n-9)}\\
&+\tfrac{48n}{(n-3)(n-5)}\\
&+\tfrac{144n(n-1)}{(n-4)(n-6)(n-8)(n-10)}.
\end{align}
\end{proof}

Now let's consider gamma random variables.  A finite sum of independent gamma random variables with the same scale parameter is a new gamma random variable with the same scale parameter, but the shape parameter is the sum of the addend shape parameters.  In the next proof, we look at independent and indentically distributed gamma random variables.

\begin{proposition}
If the $u_i$'s in the construction of our unistochastic matrix are independent $\Gamma(\alpha,\beta)$ random variables and $\tfrac{8}{\alpha}+2 < n $, then
\begin{align}
E(\parallel M -I \parallel_F^2) <&
\tfrac{16n^5 \prod_{i=0}^{7}(\alpha+i)}{ \prod_{i=1}^8[(n-1)\alpha-i]} \\    
&+ \tfrac{16n^3 \prod_{i=0}^{3}(\alpha+i)  }{ \prod_{i=1}^4[(n-1)\alpha-i]} \\         
&+  \tfrac{16n^5(n-1) \prod_{i=0}^{3}(\alpha+i)^2 }{ \prod_{i=1}^8[(n-2)\alpha - i]}.
\end{align}
\end{proposition}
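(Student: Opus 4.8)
The plan is to open exactly as in the preceding two proofs and then diverge by exploiting the gamma additivity property rather than the harmonic-arithmetic mean inequality. First I would invoke the almost-sure bound already derived in the proof of the general-case proposition,
\begin{align}
\parallel M -I \parallel_F^2 <& \sum_{i=1}^{n}\tfrac{16 u_i^8}{(u_1^2+ u_2^2+\ldots + u_n^2 -u_i^2)^4} \\
&+ \sum_{i=1}^{n}\tfrac{16 u_i^4}{(u_1^2+ u_2^2+\ldots + u_n^2-u_i^2)^2} \\
&+ \sum_{i \neq j} \tfrac{16 u_i^4 u_j^4}{(u_1^2+ u_2^2+\ldots + u_n^2-u_i^2-u_j^2)^4},
\end{align}
recalling from that proof that in each summand the numerator is independent of the denominator.

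The key difficulty is that each denominator is a \emph{sum of squares} of the $u_k$, which is not itself gamma-distributed, so the additivity property cannot be applied to it directly. To bridge this, I would apply the Cauchy-Schwarz (equivalently power-mean) inequality in the form $(\sum_k u_k)^2 \leq m \sum_k u_k^2$, where $m$ counts the terms, to obtain almost surely
\begin{align}
\tfrac{1}{(\sum_k u_k^2)^4} \leq \tfrac{m^4}{(\sum_k u_k)^8}, \qquad \tfrac{1}{(\sum_k u_k^2)^2} \leq \tfrac{m^2}{(\sum_k u_k)^4}.
\end{align}
Applying this with $m = n-1$ to the first two sums and with $m = n-2$ to the third replaces every denominator by a power of a \emph{sum} of independent $\Gamma(\alpha,\beta)$ variables. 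By the additivity property stated before the proposition, $\sum_{j\neq i}u_j \sim \Gamma((n-1)\alpha,\beta)$ and $\sum_{k\neq i,j}u_k \sim \Gamma((n-2)\alpha,\beta)$, and these sums stay independent of the respective numerators.

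Finally I would take expectations termwise, factor each summand using that independence, and substitute the gamma moment formulas $E(u_i^{2k}) = \beta^{2k}\prod_{\ell=0}^{2k-1}(\alpha+\ell)$ together with the inverse-moment formula $E(Y^{-q}) = \beta^{-q}/\prod_{\ell=1}^{q}(a-\ell)$ for $Y \sim \Gamma(a,\beta)$. The scale factors $\beta$ cancel, reflecting that $M$ depends only on the direction of $\vec{u}$; bounding $(n-1)^4,(n-1)^2 < n^4,n^2$ and $(n-2)^4 < n^4$, and then summing the $n$, $n$, and $n(n-1)$ identical terms, produces the three displayed fractions. The main obstacle is exactly the mismatch flagged above, namely that the denominators are sums of squares rather than gamma variables, which the Cauchy-Schwarz step resolves at the cost of the factors $m^4$ and $m^2$. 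The second point to watch is the existence of the inverse moments: the third sum requires $E(Y^{-8})$ for $Y \sim \Gamma((n-2)\alpha,\beta)$, which forces $(n-2)\alpha > 8$, and this is precisely the hypothesis $\tfrac{8}{\alpha}+2 < n$ (the other two inverse moments, needing $(n-1)\alpha > 8$ and $(n-1)\alpha > 4$, are then automatic).
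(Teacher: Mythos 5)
Your proposal is correct and takes essentially the same route as the paper's own proof: the identical almost-sure Frobenius bound, the same Cauchy--Schwarz step replacing sums of squares by powers of sums so that gamma additivity applies, and the same gamma moment and inverse-moment computations, with the hypothesis $\tfrac{8}{\alpha}+2<n$ entering exactly where you place it. If anything you are slightly more explicit than the paper, which passes directly to the constants $n^4$ and $n^2$ where you first track $(n-1)^4$, $(n-1)^2$, and $(n-2)^4$ before relaxing them.
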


\begin{proof}
Previously we showed that almost surely
\begin{align}
\parallel M -I \parallel_F^2 <& 
\sum_{i=1}^{n}\tfrac{16}{(u_1^2+ u_2^2+\ldots + u_n^2 -u_i^2)^4}u_i^8 \\
&+\sum_{i=1}^{n}\tfrac{16}{(u_1^2+ u_2^2+\ldots + u_n^2-u_i^2)^2}u_i^4  \\
& +\sum_{i \neq j} \tfrac{16}{(u_1^2+ u_2^2+\ldots + u_n^2-u_i^2-u_j^2)^4}u_i^4 u_j^4.
\end{align}
Using the Cauchy-Schwarz inequality and the fact that $0< u_i$ almost surely for all $i$, we get
\begin{align}
\parallel M -I \parallel_F^2 <& 
\sum_{i=1}^{n}\tfrac{16n^4}{(u_1+ u_2+\ldots + u_n -u_i)^8}u_i^8 \\
&+\sum_{i=1}^{n}\tfrac{16n^2}{(u_1+ u_2+\ldots + u_n-u_i)^4}u_i^4  \\
& +\sum_{i \neq j} \tfrac{16n^4}{(u_1+ u_2+\ldots + u_n-u_i-u_j)^8}u_i^4 u_j^4.
\end{align}
If we take expected values, and take advantage of the independent and identically distributed $u_i$'s,

\begin{align}
E(\parallel M -I \parallel_F^2) <& 
E(\tfrac{16n^5}{(u_1+ u_2+\ldots + u_n -u_i)^8})E(u_i^8) \\
&+E(\tfrac{16n^3}{(u_1+ u_2+\ldots + u_n-u_i)^4})E(u_i^4)  \\
& +E(\tfrac{16n^5(n-1)}{(u_1+ u_2+\ldots + u_n-u_i-u_j)^8})E(u_i^4)E( u_j^4).
\end{align}
Since the $u_i$'s are gamma random variables and $\tfrac{8}{\alpha}+2 < n $, we may replace the denominator sums with gamma random variables.  Let $\gamma_{j}$ denote a gamma random varible with parameters $j\alpha$, and $\beta$.
\begin{align}
E(\parallel M -I \parallel_F^2) <&
\tfrac{16n^5 \prod_{i=0}^{7}(\alpha+i)}{ \prod_{i=1}^8[(n-1)\alpha-i]} \\
&+ \tfrac{16n^3 \prod_{i=0}^{3}(\alpha+i)  }{ \prod_{i=1}^4[(n-1)\alpha-i]} \\
&+  \tfrac{16n^5(n-1) \prod_{i=0}^{3}(\alpha+i)^2 }{ \prod_{i=1}^8[(n-2)\alpha - i]}.
\end{align}
\end{proof}

Next we will consider bistochastic matrices arising from random Householder matrices where 
\begin{align}
u_1 = u_2 = \ldots = u_n , \text{ and } u_i \neq 0
\end{align} almost surely.  It follows that almost surely
\begin{align}
H = I - \tfrac{2}{n}\begin{bmatrix}1 & \ldots & 1 \\ \vdots & \ddots & \vdots \\ 1 & \ldots & 1\end{bmatrix},
M = \big(\tfrac{n-4}{n}\big)I + \begin{bmatrix}\tfrac{4}{n^2} & \ldots & \tfrac{4}{n^2} \\ \vdots & \ddots & \vdots \\ \tfrac{4}{n^2} & \ldots & \tfrac{4}{n^2} \end{bmatrix}.
\end{align}
 
For the previous matrix probability distributions described, we use random numbers to build matrices, then use the matrices to construct an empirical approximation to the probability distribution of the statistic under consideration.  When $u_1=u_2=\ldots = u_n$, $u_i \neq 0$ almost surely, the aspects of the matrix only depends on $n$, we know the precise distribution of all matrix statistics.  If reasonable for the dynamical system under consideration, we could use $M$ to establish critical values for hypothesis testing.
So let's look at some aspects of such $M$'s, after a lemma.
\begin{lemma}
If $D_n$ and $S_n$ are $n \times n$ symmetric matrices such that
\begin{align}
D_n = \begin{bmatrix}
\alpha & \beta & \beta  & \ldots & \beta \\
\beta & \alpha & \beta  & \ldots & \beta \\
\beta & \beta & \alpha  & \ldots & \beta \\
\vdots& \vdots& \vdots  & \ddots & \vdots\\
\beta & \beta & \beta & \ldots  & \alpha
\end{bmatrix} \text{ and } 
S_n = \begin{bmatrix}
\beta & \vec{\beta}^T  \\
\vec{\beta} & D_{n-1}
\end{bmatrix}
\end{align}
where $\vec{\beta}$ is the $n-1$ vector with $\beta$ for all entries, then
\begin{align}
\det(D_n) &= (\alpha - \beta)^{n-1}(\alpha + (n-1)\beta) \text{ and } \\
\det(S_n) &= (\alpha - \beta)^{n-1}\beta.
\end{align}
\end{lemma}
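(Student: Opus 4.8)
The plan is to reduce each matrix to upper-triangular form by elementary row and column operations, which leave the determinant unchanged, and then read off the determinant as the product of the diagonal entries. The structural feature that makes this work is that in both $D_n$ and $S_n$ every off-diagonal entry equals $\beta$; consequently, subtracting the first row from each of the remaining rows causes massive cancellation. Note also that $S_n$ is exactly $D_n$ with its $(1,1)$ entry lowered from $\alpha$ to $\beta$, so the two computations are nearly identical and differ only in how the first column is treated.

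I would begin with $S_n$, which is the easier of the two. Subtracting the first row $(\beta,\beta,\ldots,\beta)$ from row $i$ (for $2 \le i \le n$) replaces that row by the vector whose only nonzero entry is $\alpha-\beta$ in position $i$: the first coordinate becomes $\beta-\beta=0$, the diagonal coordinate becomes $\alpha-\beta$, and every other coordinate becomes $\beta-\beta=0$. The resulting matrix is upper triangular with diagonal entries $\beta,\alpha-\beta,\ldots,\alpha-\beta$, so $\det(S_n)=\beta(\alpha-\beta)^{n-1}$ follows immediately.

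For $D_n$ the same row subtraction leaves, in each lower row $i$, the entry $\beta-\alpha$ in the first column together with $\alpha-\beta$ in the diagonal position; the first column is no longer cleared because the $(1,1)$ entry is $\alpha$, not $\beta$. I would remove these stray entries with a single column operation: add columns $2,\ldots,n$ to column $1$. In each lower row the first-column entry becomes $(\beta-\alpha)+(\alpha-\beta)=0$, while in the top row the first-column entry becomes $\alpha+(n-1)\beta$. The matrix is again upper triangular, now with diagonal $\alpha+(n-1)\beta,\alpha-\beta,\ldots,\alpha-\beta$, giving $\det(D_n)=(\alpha-\beta)^{n-1}(\alpha+(n-1)\beta)$.

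I do not expect a genuine obstacle here; the only real care needed is bookkeeping of which operation affects which entry. A tempting alternative --- diagonalizing $D_n$ via its all-ones eigenvector and handling $S_n$ by a Schur-complement (block-determinant) formula against the block $D_{n-1}$ --- also works and in fact exposes why the answers factor this way, but it requires $D_{n-1}$ to be invertible and hence a separate polynomial-continuity argument to cover the degenerate cases $\alpha=\beta$ and $\alpha+(n-2)\beta=0$. The row-and-column-reduction route avoids this entirely, since elementary operations preserve the determinant as an identity of polynomials in $\alpha$ and $\beta$ with no side conditions.
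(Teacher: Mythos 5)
Your proof is correct, but it takes a different route from the paper. The paper proves both formulas simultaneously by induction: expanding along the first row (after suitable row and column interchanges) yields the coupled recurrences
\begin{align}
\det(D_{n+1}) &= \alpha \det(D_n) - n\beta \det(S_n), \\
\det(S_{n+1}) &= \beta \det(D_n) - n\beta \det(S_n),
\end{align}
into which the induction hypothesis is substituted and $(\alpha-\beta)$ factored out. This explains why the lemma bundles $D_n$ and $S_n$ together in the first place: $S_n$ is the auxiliary quantity the cofactor expansion of $D_{n+1}$ produces, so the two determinants must be carried along in tandem. Your elimination argument decouples them entirely --- each determinant falls out of a single pass of row operations (plus one column operation for $D_n$), with no induction, no base case, and no recurrence to verify, and each diagonal entry of the resulting triangular matrix is visibly one of the factors in the answer. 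Your closing remark is also well taken: because elementary operations preserve the determinant as a polynomial identity in $\alpha$ and $\beta$, you avoid the invertibility side conditions that a Schur-complement or eigenvector-diagonalization argument would need (and which the paper's induction likewise avoids). One small observation: the paper's written proof states only the inductive step and omits an explicit base case, whereas your argument has no such loose end; on the other hand, the paper's recurrences make transparent how $\det(S_n)$ feeds into $\det(D_{n+1})$, structural information your computation does not surface. Both proofs are complete and elementary; yours is the shorter and more self-contained of the two.
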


\begin{proof}[Proof by induction:]

Assume true for $n$.  Using the fact that interchanging any two rows or any two columns of a real matrix changes the sign of the determinant, we see that
\begin{align}
\det(D_{n+1}) =& \alpha \det(D_{n}) -\beta n \det(S_{n}) \text{ and }\\
\det(S_{n+1}) =& \beta \det(D_{n}) -\beta n \det(S_{n}).
\end{align}
Using the induction hypothesis, these equations become
\begin{align}
\det(D_{n+1}) =& \alpha \bigg((\alpha - \beta)^{n-1}(\alpha + (n-1)\beta)\bigg) -\beta n \bigg((\alpha - \beta)^{n-1}\beta \bigg) \text{ and }\\
\det(S_{n+1}) =& \beta  \bigg((\alpha - \beta)^{n-1}(\alpha + (n-1)\beta)\bigg) -\beta n \bigg((\alpha - \beta)^{n-1}\beta \bigg).
\end{align}

Factoring out the $(\alpha - \beta)$ terms gives us the results.
\end{proof}

\begin{proposition}
If $M$ is the $n \times n$ matrix matrix
\begin{align}
M = \Big(\tfrac{n-4}{n}\Big)I + \begin{bmatrix}\tfrac{4}{n^2} & \ldots & \tfrac{4}{n^2} \\ \vdots & \ddots & \vdots \\ \tfrac{4}{n^2} & \ldots & \tfrac{4}{n^2} \end{bmatrix},
\end{align}
then $\det(M) = \big( \tfrac{n-4}{n} \big)^{n-1}$, $trace(M) = \tfrac{(n-2)^2}{n}$ and the Jordan canonical form of $M$ is the diagonal matrix with entries $
(1, \tfrac{n-4}{n},\ldots,\tfrac{n-4}{n})$.
\end{proposition}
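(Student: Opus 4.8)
The plan is to recognize $M$ as a special case of the matrix $D_n$ from the preceding lemma. Reading off the entries, the common off-diagonal value is $\beta = \tfrac{4}{n^2}$ and the common diagonal value is $\alpha = \tfrac{n-4}{n} + \tfrac{4}{n^2}$. First I would simplify $\alpha$ to the single fraction $\tfrac{(n-2)^2}{n^2}$, which makes all of the subsequent arithmetic transparent.

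For the determinant I would simply invoke the lemma with these $\alpha,\beta$. The two quantities appearing in the lemma's formula simplify pleasantly: $\alpha - \beta = \tfrac{(n-2)^2 - 4}{n^2} = \tfrac{n-4}{n}$ and $\alpha + (n-1)\beta = \tfrac{(n-2)^2 + 4(n-1)}{n^2} = \tfrac{n^2}{n^2} = 1$. Substituting into $\det(D_n) = (\alpha-\beta)^{n-1}(\alpha+(n-1)\beta)$ yields $\det(M) = \big(\tfrac{n-4}{n}\big)^{n-1}$. The trace is immediate: summing the $n$ diagonal entries gives $n\alpha = \tfrac{(n-2)^2}{n}$.

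For the eigenvalues I would write $M = \tfrac{n-4}{n} I + \tfrac{4}{n^2}\mathbf{1}\mathbf{1}^T$, exhibiting it as a scalar multiple of the identity plus a rank-one symmetric matrix, where $\mathbf{1}$ is the all-ones column vector. The rank-one piece $\tfrac{4}{n^2}\mathbf{1}\mathbf{1}^T$ has $\mathbf{1}$ as an eigenvector with eigenvalue $\tfrac{4}{n}$ and has the orthogonal complement $\mathbf{1}^\perp$ as an $(n-1)$-dimensional eigenspace with eigenvalue $0$. Adding $\tfrac{n-4}{n}$ to each eigenvalue gives eigenvalue $\tfrac{n-4}{n} + \tfrac{4}{n} = 1$ on $\mathbf{1}$ and eigenvalue $\tfrac{n-4}{n}$ on the $(n-1)$-dimensional space $\mathbf{1}^\perp$. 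Equivalently, the same eigenvalues fall out of the lemma applied to $\det(M - \lambda I)$, which factors as $\big(\tfrac{n-4}{n} - \lambda\big)^{n-1}(1 - \lambda)$.

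Finally, to conclude that the Jordan canonical form is diagonal I would note that $M$ is real and symmetric, so the spectral theorem guarantees diagonalizability; alternatively, the explicit eigenspace decomposition above already shows the geometric multiplicities sum to $n$. Either way the Jordan canonical form is the diagonal matrix with entries $\big(1, \tfrac{n-4}{n}, \ldots, \tfrac{n-4}{n}\big)$. There is no real obstacle here beyond bookkeeping; the only point warranting a moment's care is confirming that the eigenvalue $\tfrac{n-4}{n}$ has full geometric multiplicity $n-1$, so that no nontrivial Jordan blocks appear, and this is settled at once by the symmetry of $M$.
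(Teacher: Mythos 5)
Your proposal is correct and follows essentially the same route as the paper: the trace read off the diagonal, the determinant from the preceding lemma with $\alpha = \tfrac{(n-2)^2}{n^2}$ and $\beta = \tfrac{4}{n^2}$, diagonalizability from real symmetry, and the eigenvalues from the lemma applied to the characteristic polynomial $\big(\tfrac{n-4}{n}-\lambda\big)^{n-1}(1-\lambda)$, which you include as your ``equivalently'' remark. Your primary eigenvalue computation via the rank-one decomposition $M = \tfrac{n-4}{n}I + \tfrac{4}{n^2}\mathbf{1}\mathbf{1}^T$ is a pleasant self-contained alternative that the paper does not spell out, but it leads to the same spectrum and conclusion.
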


\begin{proof}
The trace of $M$ follows from the definition.  The determinant follows from the previous lemma by setting $\alpha = \tfrac{(n-2)^2}{n^2}$, and $\beta = \tfrac{4}{n^2}$.

Now, the matrix is a symmetric real matrix; hence it is diagonalizable.  The eigenvalues follow from the previous lemma by setting $\alpha = \tfrac{(n-2)^2}{n^2} -\lambda$, and $\beta = \tfrac{4}{n^2}$ to get the characteristic polynomial of $M$.
\end{proof}

Since $M$ is diagonalizable, if the Markov shift $((1/n,1/n,\ldots,1/n),M)$ approximates a measure-preserving dynamical system that is mixing, the estimate of mixing rate is the rate at which 
\begin{align}
\big( \tfrac{n-4}{n} \big)^N \rightarrow 0 \text{ as } N \rightarrow \infty, 
\end{align} instead of the estimate given in \cite[Chapter 4]{MR2821978}, $\binom{N}{n-1}( \tfrac{n-4}{n} )^{N-n+1} \rightarrow 0$ as $N \rightarrow \infty$.

\section{Two Region Partitions}

There are few instances of interest where one would use our method with two partition regions.  We look at this special case as an example to help develop understanding.  A potential application is equal ratio mixing of items with minimal consequences of poor mixing, such as combining blends of coffee.  When combining two equal volumes of coffee, poor mixing would result is inconsistent taste.  Only the most serious baristas would say that inconsistent cup-of-Joe flavor is worse than poorly mixing pharmaceuticals.

Say that our unit vector is
\begin{align}
\vec{v} = \begin{pmatrix} v_1 \\ v_2 \end{pmatrix}.
\end{align}
Since the vector has norm one, we may write its Householder matrix as
\begin{align}
H = \begin{bmatrix}
1-2v_1^2 & \pm2v_1\sqrt{1-v_1^2} \\
\pm2v_1\sqrt{1-v_1^2} & 2v_1^2-1
\end{bmatrix}.
\end{align}  There are two possible doubly-stochastic matrices arising from our method,
\begin{align}
\begin{bmatrix}
(1-2v_1^2)^2 & 4v_1^2(1-v_1^2) \\
4v_1^2(1-v_1^2) & (1-2v_1^2)^2\\
\end{bmatrix}, \text{ and } \begin{bmatrix}
4v_1^2(1-v_1^2) & (1-2v_1^2)^2   \\
(1-2v_1^2)^2 & 4v_1^2(1-v_1^2)  \\
\end{bmatrix}.
\end{align}  Whose characteristic polynomials are
\begin{align}
(1-\lambda)(8v_1^4-8v_1^2+1-\lambda), \text{ and } (1-\lambda)(-8v_1^4+8v_1^2-1-\lambda).
\end{align} The second largest eigenvalue depends on $v_1$.  Let's graph the relationship.

\begin{tikzpicture}[scale=3]
\draw[blue,domain=-1:1,smooth] plot (\x,{8*(\x)^4-8*(\x)^2+1});
\draw[purple,domain=-1:1,smooth] plot (\x,{-8*(\x)^4+8*(\x)^2-1});
\draw[<->] (-1.25,0) -- (1.25,0);
\foreach \x/\xtext in {-1/-1, -0.707/-\tfrac{\sqrt{2}}{2}, -0.408/-\tfrac{\sqrt{6}}{6}, 0.408/\tfrac{\sqrt{6}}{6}, 0.707/\tfrac{\sqrt{2}}{2}, 1/1}
\draw (\x cm,1pt) -- (\x cm,-1pt) node[anchor=north] {$\xtext$};
\draw[<->] (0,-1.25) -- (0,1.25);
\foreach \y/\ytext in {-1/-1, -0.111/-\tfrac{1}{9}, 0.111/\tfrac{1}{9}, 1/1}
\draw (1pt,\y cm) -- (-1pt,\y cm) node[anchor=east] {$\ytext$};
\draw (1.3,-0.1) node {$v_1$};
\draw (0.1,1.2) node {$\lambda_2$};
\node[below, text width=11.3cm] at (0.35,-1.25) {
\small\textbf{Graphs of the relationship between $v_1$ and $\lambda_2$ when $n=2$}};
\end{tikzpicture}

When $n=2$, the relationship between $v_1$ and second largest eigenvalue defines a function from $[-1,1]$ to $[-1,1]$. In this situation, it is feasible to compute
\begin{align}
P(|\widehat{\lambda}_2-1|>k:\lambda_2=1) \text{ and }
P(|\widehat{\lambda}_2|>k:|\lambda_2|=1).
\end{align}

If $v_1$ is a beta random variable with parameters $\alpha$ and $\beta$, then
\begin{align}
E(\lambda_2) =& \pm\tfrac{8\alpha(\alpha+1)(\alpha+2)(\alpha+3)}{(\alpha+\beta)(\alpha+\beta+1)(\alpha+\beta+2)(\alpha+\beta+3)}\\ & \ \ \ \mp
\tfrac{8\alpha(\alpha+1)}{(\alpha+\beta)(\alpha+\beta+1)} \pm 1,
\end{align} where the sign of addends depends on the permutation matrix used.


\begin{thebibliography}{10}

\bibitem{MR2350285}
{\sc G.~Band and P.~Boyland}, {\em The {B}urau estimate for the entropy of a
  braid}, Algebr. Geom. Topol., 7 (2007), pp.~1345--1378.

\bibitem{MR2172684}
{\sc I.~Bengtsson, {\AA}.~Ericsson, M.~Ku{\'s}, W.~Tadej, and
  K.~{\.Z}yczkowski}, {\em Birkhoff's polytope and unistochastic matrices,
  {$N=3$} and {$N=4$}}, Comm. Math. Phys., 259 (2005), pp.~307--324.

\bibitem{MR0020547}
{\sc G.~Birkhoff}, {\em Three observations on linear algebra}, Univ. Nac.
  Tucum\'an. Revista A., 5 (1946), pp.~147--151.

\bibitem{boyland2011entropy}
{\sc P.~Boyland and J.~Harrington}, {\em The entropy efficiency of point-push
  mapping classes on the punctured disk}, Arxiv preprint arXiv:1103.1829,
  (2011).

\bibitem{PhysRevA.26.717}
{\sc G.~Casati, G.~Comparin, and I.~Guarneri}, {\em Decay of correlations in
  certain hyperbolic systems}, Phys. Rev. A, 26 (1982), pp.~717--719.

\bibitem{MR1767953}
{\sc M.~Dellnitz, G.~Froyland, and S.~Sertl}, {\em On the isolated spectrum of
  the {P}erron-{F}robenius operator}, Nonlinearity, 13 (2000), pp.~1171--1188.

\bibitem{MR1384679}
{\sc J.~Ding and A.~Zhou}, {\em Finite approximations of {F}robenius-{P}erron
  operators. {A} solution of {U}lam's conjecture to multi-dimensional
  transformations}, Phys. D, 92 (1996), pp.~61--68.

\bibitem{MR1668535}
{\sc G.~Froyland}, {\em Using {U}lam's method to calculate entropy and other
  dynamical invariants}, Nonlinearity, 12 (1999), pp.~79--101.

\bibitem{MR2276432}
\leavevmode\vrule height 2pt depth -1.6pt width 23pt, {\em On {U}lam
  approximation of the isolated spectrum and eigenfunctions of hyperbolic
  maps}, Discrete Contin. Dyn. Syst., 17 (2007), pp.~671--689 (electronic).

\bibitem{froyland1ulam}
{\sc G.~Froyland and K.~Aihara}, {\em Ulam formulae for random and forced
  systems}, Thinking, 1, p.~1.

\bibitem{MR1417720}
{\sc G.~H. Golub and C.~F. Van~Loan}, {\em Matrix computations}, Johns Hopkins
  Studies in the Mathematical Sciences, Johns Hopkins University Press,
  Baltimore, MD, third~ed., 1996.

\bibitem{harrington2011topological}
{\sc J.~Harrington}, {\em Topological efficiency of stirring with obstacles},
  PhD thesis, University of Florida, 2011.

\bibitem{MR1610768}
{\sc F.~Y. Hunt}, {\em Unique ergodicity and the approximation of attractors
  and their invariant measures using {U}lam's method}, Nonlinearity, 11 (1998),
  pp.~307--317.

\bibitem{MR0412689}
{\sc T.~Y. Li}, {\em Finite approximation for the {F}robenius-{P}erron
  operator. {A} solution to {U}lam's conjecture}, J. Approximation Theory, 17
  (1976), pp.~177--186.

\bibitem{MR1777382}
{\sc C.~Meyer}, {\em Matrix analysis and applied linear algebra}, Society for
  Industrial and Applied Mathematics (SIAM), Philadelphia, PA, 2000.
\newblock With 1 CD-ROM (Windows, Macintosh and UNIX) and a solutions manual
  (iv+171 pp.).

\bibitem{robert2009introducing}
{\sc C.~Robert and G.~Casella}, {\em Introducing Monte Carlo Methods with R},
  Use R!, Springer, 2009.

\bibitem{MR2821978}
{\sc A.~C. Smith}, {\em Using {U}lam's method to test for mixing}, ProQuest
  LLC, Ann Arbor, MI, 2010.
\newblock Thesis (Ph.D.)--University of Florida.

\bibitem{MR2560699}
{\sc M.~A. Stremler}, {\em Fluid mixing, chaotic advection, and microarray
  analysis}, in Analysis and control of mixing with an application to micro and
  macro flow processes, vol.~510 of CISM Courses and Lectures,
  SpringerWienNewYork, Vienna, 2009, pp.~323--337.

\bibitem{MR2107644}
{\sc M.~A. Stremler, F.~R. Haselton, and H.~Aref}, {\em Designing for chaos:
  applications of chaotic advection at the microscale}, Philos. Trans. R. Soc.
  Lond. Ser. A Math. Phys. Eng. Sci., 362 (2004), pp.~1019--1036.

\bibitem{MR2317905}
{\sc J.-L. Thiffeault and M.~D. Finn}, {\em Topology, braids and mixing in
  fluids}, Philos. Trans. R. Soc. Lond. Ser. A Math. Phys. Eng. Sci., 364
  (2006), pp.~3251--3266.

\bibitem{MR0280310}
{\sc S.~M. Ulam}, {\em Problems in modern mathematics}, Science Editions John
  Wiley \& Sons, Inc., New York, 1964.

\end{thebibliography}

\end{document}